\newtheorem*{definition*}{Definition}
\newtheorem{definition}{Definition}[section]
\newtheorem*{theorem*}{Theorem}
\newtheorem{theorem}[definition]{Theorem}
\newtheorem*{lemma*}{Lemma}
\newtheorem{lemma}[definition]{Lemma}
\newtheorem*{proposition*}{Proposition}
\newtheorem*{example*}{Example}
\newtheorem*{corollary*}{Corollary}
\newtheorem*{conjecture*}{Conjecture}
\newtheorem{conjecture}[definition]{Conjecture}
\newtheorem*{question*}{Question}
\newtheorem{question}[definition]{Question}
\theoremstyle{remark}
\newtheorem*{remark*}{Remark}
\newtheorem{remark}[definition]{Remark}
\title{A Note on Knot Floer Homology of Satellite Knots with (1, 1)-Patterns}
\author[W.\ Shen]{Weizhe Shen}
\address {School of Mathematics, Georgia Institute of Technology, Atlanta, GA 30332}
\email{wshen41@gatech.edu}
\begin{document}
\maketitle

\begin{abstract}
	We prove that if $P$ is a $(1,1)$-pattern knot, the two inequalities $\dim \widehat{HFK} (P(K)) \geqslant \dim \widehat{HFK} (P(U))$ and $\dim \widehat{HFK} (P(K)) \geqslant \dim \widehat{HFK} (K)$ hold for the unknot $U\subset S^3$ and any companion knot $K\subset S^3$.
\end{abstract}

\section{Introduction}
Knot Floer homology, introduced
independently by Ozsv{\'a}th-Szab{\'o} \cite{ozsvath2004holomorphic} and J. Rasmussen \cite{rasmussen2003floer}, is a powerful invariant of knots in the three-sphere. For example, it captures several geometric properties of knots such as genus \cite{ozsvath2004holomorphicgenus} and fiberedness \cite{ghiggini2008knot,ni2007knot}. The theory has several different variants; in this note, we assume the reader is familiar with the hat version, which takes the form of a bi-graded finitely generated vector space over the field $\mathbb{F}:=\mathbb{Z}/2\mathbb{Z}$:
\[
    \widehat{HFK}(K) = \bigoplus_{m,a \in \mathbb{Z}} \widehat{HFK}_m (K,a),
\]
where $K$ is a knot in $S^3$, $m$ is the \textit{Maslov} (or \textit{homological}) grading, and $a$ is the \textit{Alexander} grading.

To three-manifolds with parameterized boundary, Lipshitz-Ozsv{\'a}th-Thurston \cite{lipshitz2018bordered} associated bordered Heegaard Floer invariants. Moreover, their pairing theorems are well-adapted to the study of the result of gluing two manifolds with torus boundary. Recall that given a (pattern) knot $P$ embedded in a standard solid torus $S^1 \times D^2=:V$ and a (companion) knot $K$ in $S^3$, the satellite knot $P(K)$ is obtained from $P$ by gluing $V$ to the complement $X_K:=\overline{S^3 - \nu(K)}$ (where $\nu(K)$ is a tubular neighborhood of $K$) in such a way that the meridian of $V$ is identified with the meridian of $K$, and the longitude of $V$ is identified with the Seifert longitude of $K$. Therefore, satellite knots can be studied using bordered Heegaard Floer homology. Some early work in this approach includes \cite{levine2012knot, petkova2013cables, hom2014bordered}. This note concerns satellite knot with $(1,1)$-patterns; a knot $P\subset S^1\times D^2$ is called a \textit{(1,1)-pattern} if it admits a genus-one doubly-pointed bordered Heegaard diagram.

For three-manifolds with a single toroidal boundary component, Hanselman-Rasmussen-Watson \cite{hanselman2017bordered,hanselman2018heegaard} interpreted the relevant bordered Heegaard Floer invariants geometrically as decorated immersed curves in the once-punctured
torus. Later, a formula for the behavior of these immersed curves under cabling was given in Hanselman-Watson \cite{hanselman2019cabling}. More recently, Chen \cite{chen2019knot} studied the computation of knot Floer chain complexes of satellite knots with $(1,1)$-patterns by using immersed curves. 

Does a non-zero degree map give a rank inequality on Heegaard Floer homology? 
More specifically, Hanselman-Rasmussen-Watson \cite[Question 12]{hanselman2017bordered} asked, if there is a degree-one map $Y_1 \to Y_2$ between closed, connected, orientable three-manifolds, is it the case that $\dim \widehat{HF}(Y_1) \geqslant \dim \widehat{HF}(Y_2)$? For integer homology spheres, Karakurt-Lidman \cite[Conjecture 9.4]{karakurt2015rank} proposed that if there is a non-zero degree map $Y_1 \to Y_2$ between them, then $\text{rank}\,HF_{red}(Y_1) \geqslant \text{rank}\,HF_{red}(Y_2)$ and $\text{rank}\,\widehat{HF}(Y_1) \geqslant \text{rank}\,\widehat{HF}(Y_2)$. Karakurt-Lidman \cite[Theorem 1.9]{karakurt2015rank} also studied maps between Seifert homology spheres. 
It is natural to ask similar questions about knot complements in $S^3$ and rank inequalities on knot Floer homology. 
Given a degree-one map $\varphi: X_K \to X_U$ that preserves peripheral structure\footnote{We refer the reader to \cite[Proposition 1]{boileau2016one} for a proof of the existence of such a map.}, the induced map $\tilde{\varphi}: X_{P(K)} \to X_{P(U)}$ is well-defined and further induces an epimorphism $\tilde{\varphi}_*: \pi_1(X_{P(K)}) \to \pi_1(X_{P(U)})$ that also preserves peripheral structure. This is a special case of \cite[Question 1.9]{juhasz2016concordance}, in which Juh\'{a}sz-Marengon asked, 
for knots $K_1$ and $K_2$ in $S^3$ such that there is an epimorphism $\pi_1(X_{K_1}) \to \pi_1(X_{K_2})$ preserving peripheral structure,
is it true that $\dim \widehat{HFK}(K_1) \geqslant \dim \widehat{HFK} (K_2)$? 
We state the version corresponding to the special case $\pi_1(X_{P(K)}) \to \pi_1(X_{P(U)})$ in the following conjecture.

\begin{conjecture}\label{conj:special}
    Given any pattern knot $P$ in $S^1\times D^2$ and any companion knot $K$ in $S^3$, there is an inequality $\dim \widehat{HFK} (P(K)) \geqslant \dim \widehat{HFK} (P(U))$, where $U$ denotes the unknot in $S^3$.
\end{conjecture}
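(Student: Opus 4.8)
The plan is to turn both sides into minimal geometric intersection numbers of immersed curves and then compare them topologically. First I would invoke Chen's computation: since $P$ is a $(1,1)$-pattern, its genus-one doubly-pointed bordered Heegaard diagram produces an immersed curve (or decorated train track) $\vartheta_P$ in the marked boundary torus of the solid torus $V$, while the companion complement $X_K$ is represented by the Hanselman--Rasmussen--Watson immersed (multi)curve $\gamma_K$ in the punctured torus. Pairing these, the generators of $\widehat{CFK}(P(K))$ are in bijection with the intersection points of $\vartheta_P$ with a lift of $\gamma_K$ in the cover $\overline{T}$ that records the Alexander grading, and by the immersed-curve pairing theorem $\dim\widehat{HFK}(P(K))$ equals the minimal geometric intersection number $i(\vartheta_P,\gamma_K)$. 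Specializing $K=U$ gives $\dim\widehat{HFK}(P(U))=i(\vartheta_P,\gamma_U)$, where $\gamma_U$ is the single embedded curve of the Seifert-longitude slope.

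The inequality then reduces to the purely topological claim $i(\vartheta_P,\gamma_K)\ge i(\vartheta_P,\gamma_U)$. To get at this I would use the structure theorem for knot-complement curves: $\gamma_K$ is a disjoint union of a distinguished component $\gamma_K^0$, which is freely homotopic in the unpunctured torus to $\gamma_U$ (it carries the rational-longitude homology class), together with finitely many components that are null-homotopic in $T$. Since the number of crossings of $\vartheta_P$ with the sub-curve $\gamma_K^0$ in any configuration is at least $i(\vartheta_P,\gamma_K^0)$, the configuration realizing $i(\vartheta_P,\gamma_K)$ already has at least $i(\vartheta_P,\gamma_K^0)$ crossings, so it suffices to prove $i(\vartheta_P,\gamma_K^0)\ge i(\vartheta_P,\gamma_U)$.

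The main obstacle is exactly this last inequality, and it is genuinely geometric rather than homological: although $\gamma_K^0$ and $\gamma_U$ represent the same class in $H_1(T)$, this only forces their \emph{algebraic} intersection numbers with $\vartheta_P$ to agree, whereas $i(\vartheta_P,\gamma_U)$ may strictly exceed that algebraic number. I would therefore compare the two curves in the cover $\overline{T}$, where $\gamma_U$ lifts to a standard family of parallel lines and $\gamma_K^0$ lifts to a bi-infinite curve with the same ends. The strategy is a ``pull-tight''/bigon argument: put $\vartheta_P$, $\gamma_K^0$, and $\gamma_U$ simultaneously taut and show that every lift of an intersection point of $\vartheta_P$ with $\gamma_U$ can be matched injectively to an intersection point of $\vartheta_P$ with $\gamma_K^0$. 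Here I expect to need the finer structure of knot-complement curves --- their invariance under the elliptic involution, the fact that they avoid the distinguished basepoint, and that $\gamma_K^0$ differs from the straight line $\gamma_U$ only by paired excursions wrapping the punctures --- in order to rule out the a priori possibility that an excursion of $\gamma_K^0$ detours around a crossing and destroys it. Making this matching precise, so that excursions can only create intersections and never cancel the unknot's contribution, is the crux; once it is in place the two specializations combine to give $\dim\widehat{HFK}(P(K))\ge\dim\widehat{HFK}(P(U))$.
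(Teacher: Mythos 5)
Your first two reductions coincide with the paper's: invoke Chen's pairing theorem to convert both sides into minimal intersection counts in a doubly-pointed pairing diagram, discard the null-homotopic closed components of the companion curve (which only lowers the count), and reduce to comparing the distinguished longitudinal component $\gamma_K^0$ with the unknot's straight curve $\gamma_U$. But the proposal then stops at exactly the step that carries all of the content. You correctly observe that $[\gamma_K^0]=[\gamma_U]$ in $H_1(T^2)$ only controls the \emph{algebraic} intersection number with $\vartheta_P$, you name the inequality $i(\vartheta_P,\gamma_K^0)\geqslant i(\vartheta_P,\gamma_U)$ as ``the crux,'' and you describe a hoped-for injective matching of intersection points without constructing it or explaining why an excursion of $\gamma_K^0$ cannot detour around an intersection that $\gamma_U$ is forced to have. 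As written, this is a statement of the remaining difficulty rather than a proof of it; the argument has a genuine gap at its central step.

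For comparison, the paper closes this gap by a different mechanism than the matching you sketch: it lifts everything to $\mathbb{R}^2$, truncates the (bi-infinite) distinguished component to a compact arc $\alpha_0^*$ consisting of the $k$ periods that contain every intersection with one fixed lift $\beta_0$ of the pattern curve, caps this arc off to a closed curve, and uses the $180^\circ$ symmetry of knot-complement curves together with the Whitney--Graustein theorem to conclude the capped-off curve has rotation number $\pm 1$ and hence is regularly homotopic to an embedded circle; so all self-intersections of $\alpha_0^*$ can be resolved. It then straightens $\alpha_0^*$ period by period onto the horizontal segment that is the unknot's curve, arranging that the number of intersections with $\beta_0$ never increases, and finally removes trivial bigons without crossing basepoints so that the terminal count is exactly $\dim\widehat{HFK}(P(U))$. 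A point your ``simultaneously taut'' framework obscures is that the intermediate homotopies are \emph{allowed to cross basepoints}: one only needs monotonicity of the intersection count along the deformation, not that each intermediate diagram computes the same Floer homology. Two smaller remarks: the statement you were given is the conjecture for arbitrary patterns, and your argument (like the paper's theorem) only addresses the $(1,1)$ case, which you should say explicitly; and the relevant minimality for $\dim\widehat{HFK}$ is with respect to homotopies avoiding both basepoints $w$ and $z$, so any taut-position or bigon argument must keep track of which bigons are basepoint-free.
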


Another closely related conjecture is the following:

\begin{conjecture}\label{conj:dim}
    Given any pattern knot $P$ in $S^1\times D^2$ and any companion knot $K$ in $S^3$, there is an inequality $\dim \widehat{HFK} (P(K)) \geqslant \dim \widehat{HFK} (K)$.
\end{conjecture}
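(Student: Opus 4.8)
The plan is to pass to immersed curves and reduce the whole statement to an inequality of geometric intersection numbers on the once-punctured torus $T_\bullet$, the boundary $\partial X_K$ with the Heegaard basepoint removed. Write $\hat{\gamma}(K)\subset T_\bullet$ for the Hanselman--Rasmussen--Watson immersed multicurve invariant of $X_K$, and let $\mu,\lambda$ be the meridian and Seifert longitude. Two inputs drive the argument. First, pulling $\hat{\gamma}(K)$ tight rel the puncture, $\dim\widehat{HFK}(K)=i\bigl(\mu,\hat{\gamma}(K)\bigr)$, the minimal geometric intersection number: indeed $\hat{\gamma}(U)=\lambda$, and each generator of $\widehat{HFK}(K)$ records a strand of $\hat{\gamma}(K)$ meeting a meridional arc, graded by its Alexander grading. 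Second, by Chen's pairing theorem a $(1,1)$-pattern $P$ is carried by an immersed curve $\vartheta_P\subset T_\bullet$ coming from its genus-one doubly-pointed bordered diagram, and $\widehat{HFK}(P(K))$ is the Lagrangian Floer homology of the pair $(\vartheta_P,\hat{\gamma}(K))$. By the immersed-curve pairing model, placing the two curves in minimal position makes the pairing differential vanish, so $\dim\widehat{HFK}(P(K))=i\bigl(\vartheta_P,\hat{\gamma}(K)\bigr)$. (Thus I can only expect to prove Conjecture~\ref{conj:dim} for $(1,1)$-patterns, which is exactly the regime where Chen's pairing applies.)

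Granting these, Conjecture~\ref{conj:dim} becomes the purely topological assertion
\[
    i\bigl(\vartheta_P,\hat{\gamma}(K)\bigr)\ \geqslant\ i\bigl(\mu,\hat{\gamma}(K)\bigr).
\]
I would prove it using the standard position of $\hat{\gamma}(K)$: outside a \emph{winding region} $W$ (a meridional neighborhood of $\mu$ containing the puncture) the curve runs in arcs parallel to $\lambda$, while all of its wrapping is confined to $W$, across which exactly $n:=\dim\widehat{HFK}(K)$ strands pass. Since $\mu$ meets each strand once and realizes the right-hand side, the content is to show that any pattern curve must cross these $n$ strands at least $n$ times in total. Before doing so I must exclude inessential patterns: if $P$ lies in a ball then $P(K)=U$ for every $K$ and the inequality fails outright, so the geometric essentiality of $P$ supplied by the $(1,1)$ framework is genuinely used, being precisely what forbids $\vartheta_P$ from being homotoped off $W$. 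I would also note that a nontrivial local system on either curve scales both intersection counts by the same factor and so does not affect the inequality.

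I would then split on the winding number $w$ of $P$, the algebraic intersection of $P$ with a meridian disk of $V$. When $w\neq 0$, the class $[\vartheta_P]$ has nonzero algebraic intersection with $[\hat{\gamma}(K)]=[\lambda]$, which forces $\vartheta_P$ to traverse $W$; passing to the cyclic cover of $T_\bullet$ dual to $\lambda$ and following a single lift, each traversal is pinned by the punctures to cross all $n$ strands, so projecting back gives $i(\vartheta_P,\hat{\gamma}(K))\geqslant n$ (the core pattern $\vartheta_P=\mu$ with $w=1$ being the equality case). The delicate case is $w=0$, the Whitehead-type patterns, where homology gives no information and the essential curve $\vartheta_P$ is trivial in the $\lambda$-direction yet still encircles the puncture; here I would argue directly in minimal position, using essentiality to force $\vartheta_P$ into $W$ and a bigon-free analysis of how its arcs interleave the $n$ strands to recover the bound. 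This last step is where I expect the main obstacle to lie, together with the bookkeeping required to confirm that minimal position of two genuinely immersed, possibly self-intersecting multicurves carrying local systems really does compute the rank of the pairing with no hidden cancellation. (The companion inequality, Conjecture~\ref{conj:special}, fits the same framework, with the fixed curve $\mu$ replaced by the fixed pattern $\vartheta_P$ and $\hat{\gamma}(K)$ compared against $\hat{\gamma}(U)=\lambda$.)
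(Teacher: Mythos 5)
Your reduction of the statement to an intersection-number inequality in the pairing diagram --- $\dim\widehat{HFK}(P(K))$ bounded below by the (basepoint-respecting) minimal intersection of the pattern curve with $\widehat{HF}(X_K)$, and $\dim\widehat{HFK}(K)$ realized as $i\bigl(\mu,\widehat{HF}(X_K)\bigr)$ since the core pattern satisfies $U(K)=K$ --- is exactly the reduction the paper makes. But the key step, showing that the pattern curve meets $\widehat{HF}(X_K)$ at least as often as $\mu$ does, is where your proposal has a genuine gap, and your plan for it rests on a false premise. You split on the winding number $w$ of $P$ and claim that for $w\neq 0$ the class $[\vartheta_P]$ has nonzero (implicitly, $w$-fold) algebraic intersection with $[\lambda]$. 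In fact, for \emph{every} $(1,1)$-pattern the curve $\vartheta_P=\beta$ is an embedded attaching curve of a genus-one Heegaard diagram which, after forgetting $z$, presents the solid torus with its standard parametrization; hence $[\beta]=\pm[\mu]$ in $H_1(T^2)$ and $[\beta]\cdot[\lambda]=\pm 1$ regardless of winding number (the winding number is only visible relative to the pair of basepoints $w,z$, not in $H_1$ of the torus). So your dichotomy does not track what you want it to, the homological argument only yields $i\geqslant 1$ rather than $i\geqslant n$ (one essential traversal does not by itself force a crossing of all $n$ strands of the winding region --- arcs can double back), and the case you flag as ``the main obstacle'' is in effect every case. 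You also never use the one property of $\vartheta_P$ that makes the problem tractable: it is \emph{embedded}.

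The paper's proof of Theorem \ref{thm:main} closes this gap with a short structural argument in the opposite direction: rather than analyzing the winding region of $\widehat{HF}(X_K)$, it moves the pattern curve. Since $(\overline{\Sigma},\{\alpha_1^a,\alpha_2^a\},\beta,w)$ is a bordered diagram for the solid torus, $\beta$ is isotopic to the standard vertical curve ($=\mu$) through an isotopy that crosses lifts of $z$ but never $w$ (Figure \ref{fig:borderedSolid}); performed so as only to remove bigons, this isotopy never increases the intersection number with $\alpha_0$, and it terminates at a vertical line whose intersection with $\alpha_0$ is exactly $\dim\widehat{HFK}(K)$. No minimal-position combinatorics, no winding-number cases, and no separate treatment of closed components or local systems is needed. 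Your instinct that ``essentiality supplied by the $(1,1)$ framework is genuinely used'' is right, but the usable content is stronger and more specific than essentiality: $\vartheta_P$ is an embedded curve standard up to isotopy rel $w$, and that single fact replaces the entire analysis you defer. As written, your proposal does not constitute a proof.
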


The purpose of this note is to prove Conjectures \ref{conj:special} and \ref{conj:dim} when $P$ is a $(1,1)$-pattern, by using (geometrically interpreted) bordered Heegaard Floer invariants. 

\begin{theorem}\label{thm:second}
    For any $(1,1)$-pattern knot $P \subset S^1\times D^2$, the inequality
    \begin{equation}\label{eqn: pattern}
        \dim \widehat{HFK} (P(K)) \geqslant \dim \widehat{HFK} (P(U))
    \end{equation}
    holds for the unknot $U\subset S^3$ and any companion knot $K\subset S^3$.
\end{theorem}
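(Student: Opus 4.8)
The plan is to translate everything into the immersed-curve calculus of Hanselman--Rasmussen--Watson \cite{hanselman2017bordered,hanselman2018heegaard}, as adapted to $(1,1)$-satellites by Chen \cite{chen2019knot}. First I would recall that the bordered invariant of the companion complement $X_K$ is represented by a decorated immersed multicurve $\gamma(K)$ in the punctured torus $T^2\setminus z$, and that a $(1,1)$-pattern $P$ determines an immersed curve $\mathbf{p}$ in the same torus carrying a second basepoint $w$ that records the Alexander grading. Chen's pairing theorem identifies $\widehat{HFK}(P(K))$ with the Lagrangian Floer complex of $\mathbf{p}$ and $\gamma(K)$ in the doubly-marked torus; the first step is then to observe that once the two curves are placed in minimal position (no immersed bigons avoiding $\{z,w\}$) the differential vanishes, so that
\[
 \dim\widehat{HFK}(P(K)) = i\bigl(\gamma(K),\mathbf{p}\bigr), \qquad \dim\widehat{HFK}(P(U)) = i\bigl(\gamma(U),\mathbf{p}\bigr),
\]
where $i(\cdot,\cdot)$ is the minimal geometric intersection number in $T^2\setminus\{z,w\}$ and $\gamma(U)=\ell$ is the straight longitudinal curve representing the unknot. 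If $\gamma(K)$ carries a nontrivial local system of rank $r$, every intersection is counted with multiplicity $r\ge 1$, which only helps the desired inequality, so I would treat the trivial case and remark that local systems are harmless.

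Next I would invoke the structural description of knot-complement curves: $\gamma(K)$ has a single distinguished component $\gamma_0$ carrying the homology class $[\ell]=(1,0)$, together with finitely many components $\gamma_1,\dots,\gamma_n$ that are null-homologous in $T^2$ (though essential in $T^2\setminus z$). Taking geodesic representatives for a complete hyperbolic metric on the doubly-punctured torus realizes all components simultaneously in minimal position against $\mathbf{p}$, so intersection numbers add:
\[
 i\bigl(\gamma(K),\mathbf{p}\bigr) = i\bigl(\gamma_0,\mathbf{p}\bigr) + \sum_{j=1}^{n} i\bigl(\gamma_j,\mathbf{p}\bigr) \;\ge\; i\bigl(\gamma_0,\mathbf{p}\bigr).
\]
This reduces Theorem~\ref{thm:second} to the single-component comparison $i(\gamma_0,\mathbf{p})\ge i(\ell,\mathbf{p})$ between two curves in the \emph{same} homology class.

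To attack this comparison I would pass to the pegboard (train-track) model, homotoping both $\mathbf{p}$ and $\gamma_0$ so that they are carried by the standard wedge of the meridian $\mu$ and longitude $\lambda$ based at $z$, and compute $i(\cdot,\mathbf{p})$ as a sum of linking contributions at the central crossing region: a strand of one curve contributes $1$ precisely when its four endpoints are linked in the cyclic order $(\text{up},\text{right},\text{down},\text{left})$ on the boundary of the crossing disk. In this model $\ell$ is a single left--right strand, so $i(\ell,\mathbf{p})$ equals the number of vertical strands of $\mathbf{p}$, while $\gamma_0$ contributes its own left--right strand (forced by its homology class) plus possibly further strands recording its additional crossings with $\mu$. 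The goal is to show that these extra strands are linked with at least as many pattern strands as are lost. The \textbf{main obstacle} lies exactly here: because $\mathbf{p}$ may oscillate vertically while shielding its turnbacks behind the basepoint $z$, the line $\ell$ need not meet $\mathbf{p}$ efficiently (its count can exceed the homological minimum $|\langle(1,0),[\mathbf{p}]\rangle|$), and one must rule out the possibility that $\gamma_0$ \emph{dodges} these $z$-protected oscillations and thereby undercuts $\ell$. I expect to resolve this using the pulled-tight, elliptically symmetric normal form of $\gamma_0$ in the cylinder cover unrolling the longitude, where $\gamma_0$ becomes a monotone bi-infinite curve whose wrappings around the lifted basepoints are nested in a controlled way; monotonicity should force $\gamma_0$ to meet every $z$-protected oscillation of $\mathbf{p}$ at least as often as $\ell$ does. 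Finally, the same pegboard bookkeeping---now comparing $\mathbf{p}$ against the meridian $\mu$, whose intersection number with $\gamma(K)$ computes $\dim\widehat{HFK}(K)$---should yield Conjecture~\ref{conj:dim} in parallel, using that a genuine pattern traverses the longitude at least once.
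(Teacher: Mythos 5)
Your setup---passing to Chen's pairing diagram, discarding the null-homologous closed components of $\gamma(K)$ (which can only lower the count), and reducing to the single-component comparison $i(\gamma_0,\mathbf{p})\geq i(\ell,\mathbf{p})$ between the distinguished longitudinal component and the straight line representing the unknot---matches the paper's reduction exactly. But the step you flag as the ``main obstacle'' is not a side issue to be remarked upon; it \emph{is} the theorem, and your proposal does not prove it. The sentence ``monotonicity should force $\gamma_0$ to meet every $z$-protected oscillation of $\mathbf{p}$ at least as often as $\ell$ does'' is precisely the assertion $i(\gamma_0,\mathbf{p})\ge i(\ell,\mathbf{p})$ restated in other words: no mechanism is supplied, and the pegboard/linking bookkeeping is only set up, never run. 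Two concrete difficulties are left unaddressed. First, $\gamma_0$ is in general an \emph{immersed} curve with self-intersections, so a ``pulled-tight monotone normal form'' is not automatic and must be produced. Second, any homotopy carrying $\gamma_0$ to $\ell$ must be shown not to create intersections with $\mathbf{p}$ along the way; since such a homotopy necessarily crosses basepoints, minimality is not preserved for free.

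The paper closes exactly this gap with an explicit construction. Working in the universal cover, it truncates the lift of the distinguished component to the finite window $\alpha_0^*$ spanning the $k$ periods between the first and last horizontal segments $\mu_1$ and $\mu_k$ that meet $\beta_0$; it caps $\alpha_0^*$ off to a closed planar curve and uses the $180^\circ$ symmetry of knot-complement curves to show the resulting rotation number is $\pm 1$, so that by the Whitney--Graustein theorem (Lemma \ref{lemma:regularHomotopy}) all self-intersections can be removed by regular homotopy; it then pushes the non-horizontal part of each period into a controlled quadrant and straightens $\alpha_0^*$ to a horizontal segment, verifying that this particular sequence of moves, followed by a basepoint-respecting elimination of trivial bigons against $\beta_0$, never increases the intersection count. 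That verification---that the straightened curve cannot be undercut by $\gamma_0$---is the substantive content your proposal defers, so as written the argument is incomplete. (Your closing claim about Conjecture \ref{conj:dim} inherits the same gap, although there the paper's argument is genuinely easier, because $\beta_0$ is embedded and can be isotoped to a vertical line while crossing only lifts of $z$.)
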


\begin{theorem}\label{thm:main}
    Given a $(1,1)$-pattern knot $P\subset S^1\times D^2$, the inequality
    \begin{equation}\label{eqn: companion}
        \dim \widehat{HFK} (P(K)) \geqslant \dim \widehat{HFK} (K)
    \end{equation}
    holds for any companion knot $K\subset S^3$. 
\end{theorem}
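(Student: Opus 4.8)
The plan is to compute both sides of \eqref{eqn: companion} as minimal geometric intersection numbers of immersed curves and then compare them directly. Following Hanselman--Rasmussen--Watson and Chen, I would first represent the companion complement $X_K$ by its immersed multicurve $\gamma(K)$ in the once-punctured torus $T = \partial X_K \setminus z$, drawn in the standard way with the Seifert longitude $\lambda$ horizontal and the meridian $\mu$ vertical. The $(1,1)$-pattern $P$ is likewise encoded, via its genus-one doubly-pointed diagram, by an immersed curve $\gamma(P) \subset T$. Chen's pairing theorem then identifies $\dim \widehat{HFK}(P(K))$ with the minimal geometric intersection number $i(\gamma(P), \gamma(K))$ computed in the punctured torus (the bigrading refines this count but is not needed for the total dimension). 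The first step is to record this formula and, as a special case, to observe that the core pattern $P_0 = S^1 \times \{0\}$ satisfies $P_0(K) = K$ and has $\gamma(P_0) = \mu$, so that $\dim \widehat{HFK}(K) = i(\mu, \gamma(K))$.

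With these two identifications in hand, Theorem~\ref{thm:main} reduces to the purely topological inequality $i(\gamma(P), \gamma(K)) \geqslant i(\mu, \gamma(K))$, valid for every $(1,1)$-pattern $P$ and every companion $K$. To attack this I would put $\gamma(K)$ in taut position and exploit the constraint that $K$ is a knot in $S^3$: the total homology class of $\gamma(K)$ is $[\lambda]$, every component other than the distinguished one is null-homotopic in $T$, and consequently $\gamma(K)$ meets a fixed meridian in exactly $d := \dim \widehat{HFK}(K)$ points, arranged in a single ``stack'' along $\mu$. The goal is then to show that the pattern curve $\gamma(P)$, which is forced by the data of a genuine $(1,1)$-pattern to be essential in $T$, must run meridionally all the way across this stack at least once, and hence meet all $d$ strands.

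The technical heart is this last claim, and I expect it to be the main obstacle. One must rule out the possibility that $\gamma(P)$ ``turns back'' before traversing the full stack, thereby meeting only some of the $d$ strands of $\gamma(K)$; one must also check that the inessential (null-homotopic) components of $\gamma(K)$ contribute non-negatively rather than permitting a reduction. I would handle this with an innermost-bigon, taut-position analysis: after isotoping both curves to remove bigons in $T \setminus z$, classify the possible shapes of $\gamma(P)$ inside the winding region of the diagram, and argue that any excursion of $\gamma(P)$ failing to cross the stack can be capped against the embeddedness of the pattern's $\beta$-curve and the prescribed placement of the basepoints $w, z$ on opposite sides of $\beta$. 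This is precisely where the hypothesis that $P$ is a $(1,1)$-pattern---rather than an arbitrary pattern, e.g.\ one contained in a ball, whose associated curve would be inessential and for which the inequality can fail---enters decisively.

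Finally, I would assemble the pieces: the intersection-number reduction of the first two paragraphs together with the crossing lemma yields $i(\gamma(P),\gamma(K)) \geqslant d = i(\mu, \gamma(K))$, which is \eqref{eqn: companion}. I would then sanity-check the argument against the boundary cases where equality is expected---most notably the core pattern, and more generally patterns whose curve crosses the stack exactly once---and against winding-number-zero examples such as the Whitehead pattern, where the meridional excursion responsible for the bound arises from the clasp rather than from any net longitudinal wrapping.
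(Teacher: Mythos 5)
Your reduction of \eqref{eqn: companion} to an intersection-number inequality $i(\gamma(P),\gamma(K)) \geqslant i(\mu,\gamma(K))$, via Chen's pairing theorem and the observation that the core pattern computes $\dim \widehat{HFK}(K)$ as $i(\mu,\gamma(K))$, matches the paper's overall strategy. But the step you yourself flag as the technical heart is left unproved, and the hypothesis you propose to run it on is the wrong one: essentialness of $\gamma(P)$ in the punctured torus does \emph{not} force $\gamma(P)$ to traverse the meridional stack. An essential curve in the class of the longitude $\lambda$ can be pushed to a height where it is disjoint from $\gamma(T_{2,3})$, even though $d=3$ there; so no amount of innermost-bigon analysis starting only from ``$\gamma(P)$ is essential'' can close the argument. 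The input you are missing is isotopy-theoretic rather than positional: because $(\overline{\Sigma},\{\alpha^a_1,\alpha^a_2\},\beta,w)$ --- the pattern's genus-one diagram with the $z$-basepoint forgotten --- is a bordered Heegaard diagram for the solid torus with its standard boundary parametrization, the $\beta$-curve is isotopic to the standard meridian in the torus punctured only at $w$ (Figure \ref{fig:borderedSolid}). That, and not essentialness, is the only property of a $(1,1)$-pattern the proof needs.

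Once you have that fact, the ``must cross the stack'' lemma becomes unnecessary and the argument inverts: instead of lower-bounding $i(\gamma(P),\gamma(K))$ by forced crossings, you upper-bound $\dim\widehat{HFK}(K)$ by simplification. Realize the isotopy from $\beta$ to the vertical line as a sequence of bigon eliminations that may cross lifts of $z$ but never lifts of $w$; such moves never increase the number of intersections with $\alpha(K)$. Starting from a position realizing $\dim\widehat{HFK}(P(K))$ (minimal rel both basepoints) and ending at the vertical line, which meets $\alpha(K)$ in exactly $\dim\widehat{HFK}(K)$ points, yields \eqref{eqn: companion} directly; the closed components of $\gamma(K)$ need no separate treatment since the monotonicity holds componentwise. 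In short, the skeleton of your reduction is right, but without identifying the solid-torus diagram fact the proposal contains no proof of its key claim, and that claim, as stated for merely essential $\gamma(P)$, is false.
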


Two natural questions (which were originally pointed out by Tye Lidman) to ask are the following:

\begin{question}
Is it possible to characterize the conditions of equality and strict-inequality for the two inequalities (\ref{eqn: pattern}) and (\ref{eqn: companion}), respectively?
\end{question}

\begin{question}
Does either of the above theorems have a refinement for Maslov gradings?
\end{question}

Although these two questions are not fully resolved here, we discuss them in detail in Section \ref{sec: FurtherRemarks}; in particular, we will also see that there is no refinement for Alexander gradings.

We conclude this section by briefly explaining two major parts in proving the above theorems. To begin with, the main result of \cite{chen2019knot} allows us to obtain the dimension of $\widehat{HFK} (P(K))$ by counting the minimum intersections of the curves $\beta(P)$ and $\alpha(K)$ in its corresponding pairing diagram (which is defined in Section \ref{sec:background}):

\begin{theorem}[\cite{chen2019knot}, Theorem 1.2]\label{thm:wchen}
    Given a (1,1)-pattern knot $P\subset S^1\times D^2$ and a companion knot $K\subset S^3$, let $\widehat{HF}(X_K) \subset \partial X_K \backslash \{w'\}$ be the immersed curves of the knot complement $X_K$, and let $(\beta, \mu, \lambda, w, z) \subset \partial (S^1\times D^2)$ be a 5-tuple corresponding to a genus-one doubly-pointed bordered Heegaard diagram for $P$. Let $h:\partial X_K \to \partial (S^1\times D^2)$ be an orientation preserving homeomorphism such that
    \begin{enumerate}
        \item $h$ identifies the meridian and Seifert longitude of $K$ with $\mu$ and $\lambda$, respectively;
        \item $h(w') = w$;
        \item there is a regular neighborhood $U\subset \partial (S^1\times D^2)$ of $w$ such that $z\in U$, $U\cap (\lambda\cup \mu) = \emptyset$, and $U\cap h(\widehat{HF}(X_K) ) = \emptyset$.
    \end{enumerate}
    Let $\alpha = h(\widehat{HF}(X_K))$. Then there is a chain homotopy equivalence
    \[
        \widehat{CFK}(\alpha, \beta, w,z) \cong \widehat{CFK}(S^3, P(K)).
    \]
    Moreover, if $\alpha$ is connected, this chain homotopy equivalence preserves the Maslov grading and Alexander filtration.
\end{theorem}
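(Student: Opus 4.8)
The plan is to assemble the stated chain homotopy equivalence from the pairing theorem for bordered Heegaard Floer homology together with the immersed-curve dictionary of Hanselman-Rasmussen-Watson. First I would record the two bordered invariants entering the gluing. The genus-one doubly-pointed bordered Heegaard diagram for $P$ determines a type-$A$ module $\widehat{CFA}(P)$ over the torus algebra $\mathcal{A}(T^2)$, where the two basepoints $w,z$ retain exactly the data needed to recover the Alexander grading, while the companion complement $X_K$ determines a type-$D$ module $\widehat{CFD}(X_K)$. Since the satellite operation glues $V$ to $X_K$ along the homeomorphism $h$ matching meridian to meridian and longitude to Seifert longitude (condition (1)), the Lipshitz-Ozsv\'ath-Thurston pairing theorem yields a chain homotopy equivalence $\widehat{CFK}(S^3, P(K)) \simeq \widehat{CFA}(P) \boxtimes \widehat{CFD}(X_K)$, with the two basepoints inducing the Alexander filtration on the right-hand side.

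The second step is to convert this algebraic box tensor product into the geometric intersection picture. By the Hanselman-Rasmussen-Watson correspondence, $\widehat{CFD}(X_K)$ is encoded by the decorated immersed multicurve $\widehat{HF}(X_K)$ in the punctured torus $\partial X_K \setminus \{w'\}$; transporting it by $h$ gives $\alpha = h(\widehat{HF}(X_K))$ in $\partial (S^1\times D^2)$, while the pattern's curve $\beta$ already lives there. Conditions (2) and (3) guarantee that $\alpha$ and $\beta$ meet transversally away from a neighborhood of $w$ and that $z$ sits inside that neighborhood, so that $(\alpha,\beta,w,z)$ is an admissible doubly-pointed diagram in the torus. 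I would then invoke the Lagrangian-pairing principle underlying the immersed-curve formalism to identify $\widehat{CFA}(P)\boxtimes\widehat{CFD}(X_K)$ with the Floer complex $\widehat{CFK}(\alpha,\beta,w,z)$: its generators are the intersection points $\alpha \cap \beta$, its differential counts immersed bigons disjoint from $w$, and the basepoint $z$ records the drop in Alexander grading across such bigons.

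The final step is the grading statement. The relatively graded equivalence follows from the pairing theorem directly, so the real work is in promoting it to an equivalence respecting the absolute Maslov grading and the Alexander filtration. Here I would track the $z$- and $w$-multiplicities of the pairing bigons to read off the Alexander grading, and use the Maslov index of the connecting disks together with a fixed reference generator to pin down the homological grading. The main obstacle I anticipate is precisely this grading bookkeeping across the glued diagram: making both gradings globally consistent requires a single well-defined baseline, and this is exactly where the hypothesis that $\alpha$ is \emph{connected} enters. On a connected curve the gradings propagate unambiguously from one intersection point to all others, whereas a disconnected multicurve would a priori leave a relative grading shift between components undetermined. I would therefore expect the connectedness assumption to be the linchpin that upgrades the equivalence from relative to absolute gradings, and I would devote most care to verifying that the grading conventions on $\widehat{CFA}(P)$ and on the immersed curve $\alpha$ are compatible once the diagrams are glued.
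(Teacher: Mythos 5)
The first thing to note is that the paper you are working from does not prove this statement at all: it is quoted verbatim as Theorem~1.2 of \cite{chen2019knot} and used as a black box, so there is no in-paper proof to compare against. Judged on its own terms, your proposal correctly identifies the strategy that the cited proof actually follows --- compute $\widehat{CFA}$ of the $(1,1)$ pattern from its genus-one diagram, represent $\widehat{CFD}(X_K)$ by the Hanselman--Rasmussen--Watson immersed multicurve, and match the box tensor product with intersection points of $\alpha$ and $\beta$ --- but as written it is an outline rather than a proof, and the one step you dispose of by ``invoking the Lagrangian-pairing principle'' is precisely the content of the theorem. The HRW pairing statement identifies $\widehat{CFA}(M_1)\boxtimes\widehat{CFD}(M_2)$ with the Floer homology of curves for a \emph{single}-basepoint gluing; here the pattern diagram is doubly pointed, $\alpha$ is in general immersed (so the ``bigons'' of $\widehat{CFK}(\alpha,\beta,w,z)$ need not be embedded), and one must check that the $A_\infty$ operations feeding the box tensor differential are in bijection with $z$- and $w$-avoiding bigons in the torus and that the $z$-multiplicities reproduce the Alexander filtration. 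None of that is routine, and it is exactly what conditions (2) and (3) on the homeomorphism $h$ are engineered to make work; your write-up treats those conditions only as transversality/admissibility hypotheses.

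Second, your explanation of the connectedness hypothesis is a plausible guess but is asserted rather than argued: you claim that for a disconnected multicurve the relative grading shift between components is ``a priori undetermined,'' but for knot complements in $S^3$ the absolute gradings of $\widehat{CFD}(X_K)$ are determined by the knot, so the indeterminacy is not on the bordered-invariant side; the issue is whether the \emph{geometric} grading bookkeeping on the pairing diagram can be propagated consistently, which is a different (and diagram-level) problem. If you intend this as a proof of the theorem rather than a reading guide to Chen's paper, the two places that need actual arguments are (i) the doubly-pointed, immersed-curve version of the pairing theorem, and (ii) the absolute-grading identification in the connected case.
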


Another essential part of the proof—applying a sequence of moves to curves without increasing intersection number—is inspired by the proof of \cite[Theorem 52]{hanselman2017bordered}. Moreover, Theorem \ref{thm:main} is close to \cite[Theorem 11]{hanselman2017bordered}, which is a special case of \cite[Theorem 52]{hanselman2017bordered}.

\section*{Acknowledgements}
I would like to thank my advisor Jennifer Hom for suggesting this problem, and I cannot thank her enough for her continued support, guidance, and patience. I am also grateful to Wenzhao Chen and Tye Lidman for constructive comments on an earlier draft and to Steven Sivek for informative email correspondence.

\section{Preliminaries}\label{sec:background}
In this section, we primarily recapitulate some conventions and results in \cite{chen2019knot,geiges2009contact} and then set up several notations, in preparation for proving Theorems \ref{thm:second} and \ref{thm:main} in Section \ref{sec:proof}. 

We begin with a more thorough discussion about Theorem \ref{thm:wchen}. The 5-tuple in Theorem \ref{thm:wchen} is obtained from a genus-one doubly-pointed bordered Heegaard diagrams, $(\overline{\Sigma}, \{\alpha_1^a, \alpha_2^a\}, \beta, w,z)$, of $P$. This is done by viewing $\beta$, $w$, and $z$ as embedded in $\partial(S^1\times D^2)$ and identifying the pair of arcs $\{\alpha_1^a, \alpha_2^a\}$ with the longitude-meridian pair of $\partial(S^1\times D^2)$. See Figure \ref{fig:5tuple} for an example of the Mazur pattern. 

\begin{figure}[htbp]{\small
		\begin{overpic}[scale=0.31]
			{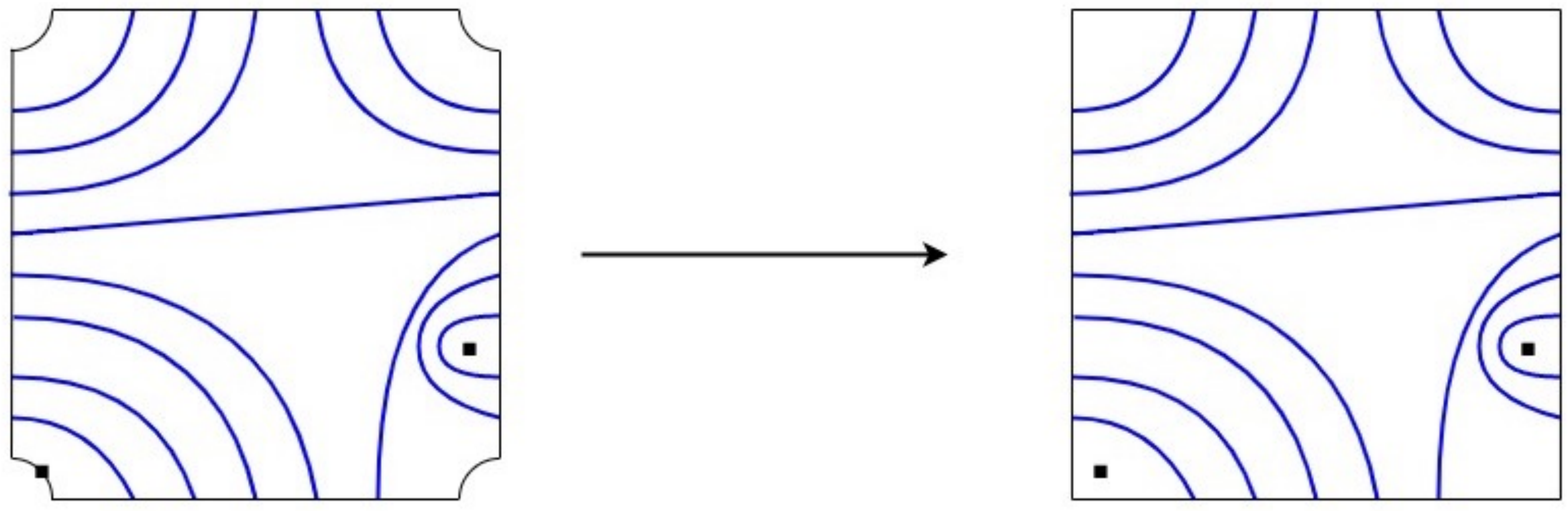}
			\put(1, 74){$1$}
			\put(73, 74){$2$}
			\put(73, 0){$3$}
			\put(8, 6){$w$}
			\put(78, 24){$z$}
			\put(34, 80){$\alpha_1^a$}
			\put(-12, 38){$\alpha_2^a$}
			\put(40, 33){\textcolor{blue}{$\beta$}}
			\put(196, 80){$\lambda$}
			\put(154, 38){$\mu$}
			\put(202, 33){\textcolor{blue}{$\beta$}}
			\put(167, 8){$w$}
			\put(239, 24){$z$}
	\end{overpic}}
	\caption{The data contained in a genus-one doubly-pointed bordered Heegaard diagram (on the left) of the Mazur pattern  can be equivalently understood as a 5-tuple (on the right). This convention comes from \cite[Sections~1~and~5]{chen2019knot}.}
	\label{fig:5tuple}
\end{figure}

In practice, Theorem \ref{thm:wchen} shows that, after identifying the torus $\partial(S^1\times D^2)$ with the quotient space $[0,1]\times[0,1]/\sim$ in the standard way and dividing the unit square evenly into four quadrants, we can fit $\alpha = \widehat{HF}(X_K)$ into the first quadrant (i.e., $[\frac{1}{2}, 1]\times [\frac{1}{2}, 1]$), fit $(\beta, w, z)$ into the third quadrant (i.e., $[0, \frac{1}{2}]\times [0, \frac{1}{2}]$), and extend them both horizontally and vertically to obtain a diagram that yields a chain complex isomorphic to $\widehat{CFK}(P(K))$. We call such a diagram a \textit{pairing diagram} for $P(K)$, and we denote by $\alpha(K)$ and $\beta(P)$ the curves obtained from $\alpha$ and $\beta$ by extension, respectively. Figure \ref{fig:pairingDiagram} displays four examples, in which $M$ denotes the Mazur pattern, and $T_{p,q}$ denotes the $(p,q)$-torus knot.

\begin{figure}[htbp]
    \begin{subfigure}[h]{0.45\linewidth}
        \centering
        \includegraphics[width=2.5cm]{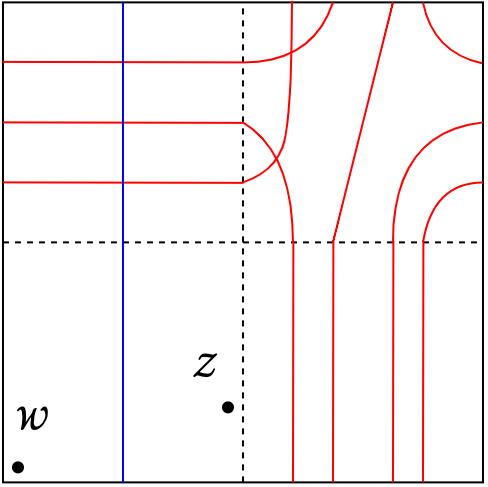}
        \caption{$U(T_{2,3})$}
    \end{subfigure}
    \begin{subfigure}[h]{0.45\linewidth}
        \centering
        \includegraphics[width=2.75cm]{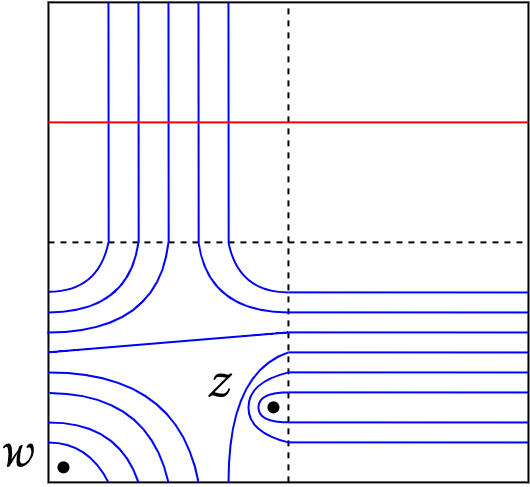}
        \caption{$M(U)$}
    \end{subfigure}
\vspace{3pt}
    \begin{subfigure}[h]{0.45\linewidth}
        \centering
        \includegraphics[width=2.75cm]{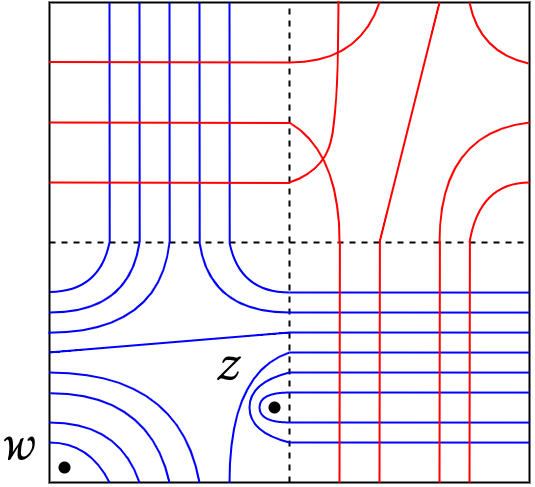}
     \caption{$M(T_{2,3})$}
    \end{subfigure}
    \begin{subfigure}[h]{0.45\linewidth}
        \centering
        \includegraphics[width=2.5cm]{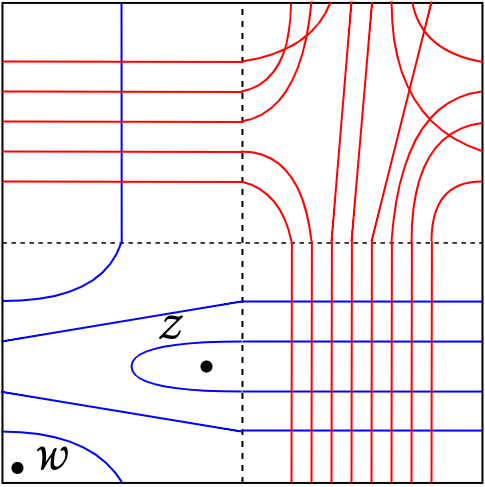}
        \caption{$(T_{2,5})_{3,1} := T_{3,1}(T_{2,5})$}
    \end{subfigure}
\caption{Examples of paring diagrams. The curves $\beta(P)$ and $\alpha(K)$ are drawn in blue and red, respectively.}
\label{fig:pairingDiagram}
\end{figure}

The proof of Theorem \ref{thm:second} needs some caution, as we will be moving $\alpha$-curves, which are immersed in general. The following lemma, which is widely known as the \textit{Whitney–Graustein theorem}, allows us to get rid of self-intersections of immersed curves (after certain modifications, which will be explained in Section \ref{sec:proof}). 

\begin{lemma}[\cite{whitney1937regular}, Theorem 1; see also \cite{geiges2009contact}, Theorem 1]\label{lemma:regularHomotopy}
    Regular homotopy classes of regular closed curves $\bar{\gamma}: S^1 \to \mathbb{R}^2$ are in one-to-one correspondence with the integers, the correspondence being given by 
    \[
        [\bar{\gamma}] \longmapsto \mathsf{rot}(\bar{\gamma}),
    \]
    where $\mathsf{rot}(\bar{\gamma})$ \footnote{By \cite{geiges2009contact}, the integer $\mathsf{rot}(\bar{\gamma})$ is called the \textit{rotation number} of $\bar{\gamma}$, and it is a signed count of the number of complete turns of the velocity vector $\bar{\gamma}'$ as we traverse $\bar{\gamma}$ in a pre-fixed orientation.} is the degree of the map $S^1 \to \mathbb{R}^2\backslash\{\mathbf{0}\}$, $s \mapsto \bar{\gamma}'(s)$.
\end{lemma}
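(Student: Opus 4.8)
The plan is to prove the stated bijection in three parts: that the rotation number is well defined on regular homotopy classes, that every integer is realized, and---the substantive part---that two regular closed curves with equal rotation number are regularly homotopic.

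For well-definedness and invariance I would observe that a regular homotopy $H\colon S^1\times[0,1]\to\mathbb{R}^2$ supplies a continuous family of tangent maps $g_t\colon S^1\to S^1$, $g_t(s)=\partial_s H(s,t)/|\partial_s H(s,t)|$, which is legitimate precisely because regularity guarantees the velocity never vanishes along the homotopy. Since the degree of a map $S^1\to S^1$ is a homotopy invariant, $\mathsf{rot}(\gamma_0)=\deg g_0=\deg g_1=\mathsf{rot}(\gamma_1)$, so $\mathsf{rot}$ descends to regular homotopy classes and separates curves with distinct rotation numbers. Surjectivity onto $\mathbb{Z}$ is handled by exhibiting explicit models: a once-traversed circle for $\pm1$, a figure-eight for $0$, and suitably looping multi-petaled curves for $|n|\ge 2$.

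The main work is the converse. First I would reduce to a convenient normal form: reparametrizing to unit speed is a regular homotopy, and translating and rotating the plane together with shifting the parameter lets me assume both curves are defined on $\mathbb{R}/\mathbb{Z}$ with $\gamma_i(0)=0$ and $\gamma_i'(0)=(1,0)$. Writing $\gamma_i'(s)=(\cos\theta_i(s),\sin\theta_i(s))$, I lift to angle functions $\theta_i\colon[0,1]\to\mathbb{R}$ with $\theta_i(0)=0$ and $\theta_i(1)=2\pi n$, the common value forced by the equal rotation numbers. I then interpolate the \emph{angles} linearly, $\theta_t=(1-t)\theta_0+t\theta_1$, which preserves the endpoint data and hence the winding of the unit field $e_t(s):=(\cos\theta_t(s),\sin\theta_t(s))$. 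The naive integral $\int_0^s e_t$ need not close up, so I enforce closure by subtracting the mean:
\[
    \gamma_t(s)\;=\;\int_0^s e_t(u)\,du\;-\;s\int_0^1 e_t(u)\,du .
\]
This family is closed for every $t$, reduces to $\gamma_0$ and $\gamma_1$ at the endpoints (because $\int_0^1 e_i=\gamma_i(1)-\gamma_i(0)=0$), and has velocity $\gamma_t'(s)=e_t(s)-v_t$ with $v_t:=\int_0^1 e_t(u)\,du$.

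The hard point---and the step I expect to require the most care---is maintaining \emph{regularity} across the homotopy, i.e.\ $\gamma_t'(s)=e_t(s)-v_t\neq 0$ for all $s,t$. Since $e_t$ is a unit field, this is equivalent to keeping $v_t$ in the open unit disk, which in turn holds as long as $e_t$ is not a constant field; and when $v_t$ lies strictly inside the disk the shifted loop $s\mapsto e_t(s)-v_t$ still winds $n$ times about the origin, so $\mathsf{rot}(\gamma_t)\equiv n$ and the path is genuinely through regular closed curves. For $n\neq0$ non-constancy of $e_t$ is automatic, since $\theta_t(1)-\theta_t(0)=2\pi n\neq0$. The delicate case is $n=0$, where a convex combination of the two angle functions can momentarily be constant (forcing $v_t$ onto the unit circle and $\gamma_t'$ to vanish); here I would perturb the path $t\mapsto\theta_t$ in the space of angle functions with fixed endpoints---or route the homotopy through a fixed standard model of rotation number $0$---to avoid the thin locus of constant fields. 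Controlling this degeneracy is the crux of the argument; once it is dispatched, assembling the three parts yields the claimed one-to-one correspondence.
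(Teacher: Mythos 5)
This lemma is the classical Whitney--Graustein theorem, and the paper does not prove it --- it is quoted from \cite{whitney1937regular} and \cite{geiges2009contact} and used as a black box. So there is no in-paper argument to compare against; what you have written is essentially Whitney's original direct proof (normalize to unit speed and base point, lift the unit tangent to an angle function, interpolate angles linearly, and correct the primitive by subtracting the mean so the curve closes up). The cited source of Geiges takes a genuinely different route, lifting plane curves to Legendrian curves in $\mathbb{R}^2\times S^1$ and reducing the regular homotopy classification to $\pi_1(\mathbb{R}^2\times S^1)\cong\mathbb{Z}$; your approach is more elementary and self-contained, while the contact-geometric one trades the analytic bookkeeping for a soft topological argument.

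Your argument is correct, and you have put your finger on the one genuinely delicate point: regularity of $\gamma_t$ fails exactly when $e_t$ is a constant unit field, which can only happen in the $n=0$ case. The fix you sketch does go through, and it is worth recording why it is cheap rather than deep. Since $\theta_t(0)=0$ for all $t$, the field $e_t$ is constant only if $\theta_t\equiv 0$, i.e.\ $(1-t)\theta_0\equiv -t\,\theta_1$; if this held for two distinct values of $t$ one could solve to get $\theta_0\equiv\theta_1\equiv 0$, which is impossible for a closed regular curve (a constant unit tangent cannot integrate to a loop). Hence the bad locus is at most a single parameter value $t^*$, occurring only when $\theta_0$ happens to be a negative multiple of $\theta_1$; replacing $\theta_0$ by $\theta_0+\epsilon\phi$ for a small bump $\phi$ vanishing at the endpoints (itself realized by a regular homotopy via the same integral formula) destroys that proportionality and clears the degeneracy. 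With that one sentence of justification supplied, all three parts of your argument assemble into a complete proof.
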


The last thing we need to recall is how to obtain $\alpha$-curves from immersed curves in a (punctured) infinite cylinder, and vice versa. Given immersed curves in $(\mathbb{R}/(\frac{1}{2}+\mathbb{Z})) \times \mathbb{R}$, we place a grid system consisting of two vertical columns of unit squares, with the middle vertices identified with the punctured points of the cylinder. Then we follow the curve and replicate its segment in a square every time we meet an edge of a grid square. In this way we build its corresponding $\alpha$-curves. Likewise, if we start with a torus with $\alpha$-curves, we can trace the curves and recover its immersed curves in an infinite cylinder, as illustrated in Figure \ref{fig:cylinderSquare}.

\begin{figure}[htbp]{\small
		\begin{overpic}[scale=0.43]
			{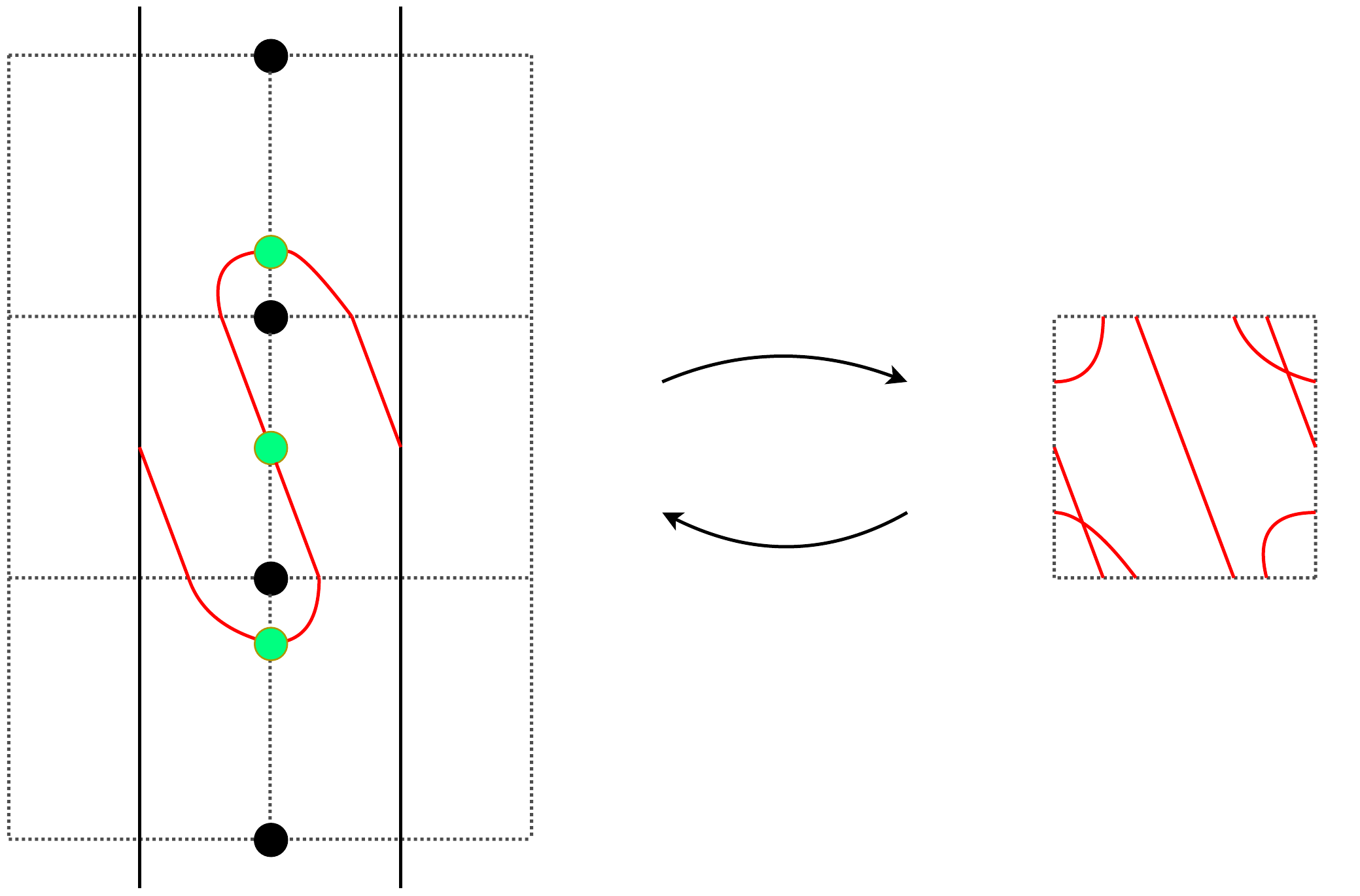}
			\put(17, 83){$0$}
			\put(32, 72){\textcircled{1}}
			\put(230, 73){\textcircled{1}}
			\put(33, 45){\textcircled{2}}
			\put(252, 96){\textcircled{2}}
			\put(61, 47){\textcircled{3}}
			\put(189, 98){\textcircled{3}}
			\put(59, 71){\textcircled{4}}
			\put(189, 83){\textcircled{4}}
			\put(34, 97){\textcircled{5}}
			\put(252, 82){\textcircled{5}}
			\put(33, 122){\textcircled{6}}
			\put(252, 68){\textcircled{6}}
			\put(60, 122){\textcircled{7}}
			\put(189, 68){\textcircled{7}}
			\put(60, 96){\textcircled{8}}
			\put(210, 92){\textcircled{8}}
			\put(18, -10){$-\frac{1}{2}$}
			\put(49, -10){$0$}
			\put(74, -10){$\frac{1}{2}$}
	\end{overpic}}
	\caption{The immersed curve for $-T_{2,3}$ in an infinite cylinder (left) and the $\alpha$-curves for $-T_{2,3}$ in a torus (right), where the circled numbers can be used to follow the construction.}\label{fig:cylinderSquare}
\end{figure}

\section{Proof of Theorems}\label{sec:proof}
\begin{proof}[Proof of Theorem \ref{thm:second}]
Given a $(1,1)$-pattern $P \subset S^1\times D^2$ and a companion knot $K\subset S^3$, let $(T^2, \alpha(K), \beta(P), w, z)$ be a pairing diagram for $P(K)$, where $T^2 \cong S^1\times S^1$. Lift the diagram to $\mathbb{R}^2$ by the covering map $\pi:= p \times p: \mathbb{R}^2 \to T^2$, where $p:\mathbb{R}\to S^1$ is given by $x\mapsto (\cos(2\pi x), \sin(2\pi x))$. Let $\beta_0$ be a lift of $\pi^{-1}(\beta(P))$. By the construction of $\beta(P)$, the lift $\beta_0$ is connected. (See Figure \ref{fig:beta0forMazur} for an example of the Mazur pattern, where the lifts of extended $\alpha$-curves are omitted.) Let $\alpha_0$ be a lift of $\alpha(K)$. Notice that $\alpha_0$ may not be connected, for the immersed curves $\alpha$ may consist of multiple components. (See Figure \ref{fig:alpha0} for an example of the right-handed trefoil, where the lifts of extended $\beta$-curves are omitted.)

\begin{figure}[htbp]{\small
		\begin{overpic}[scale=0.42]
			{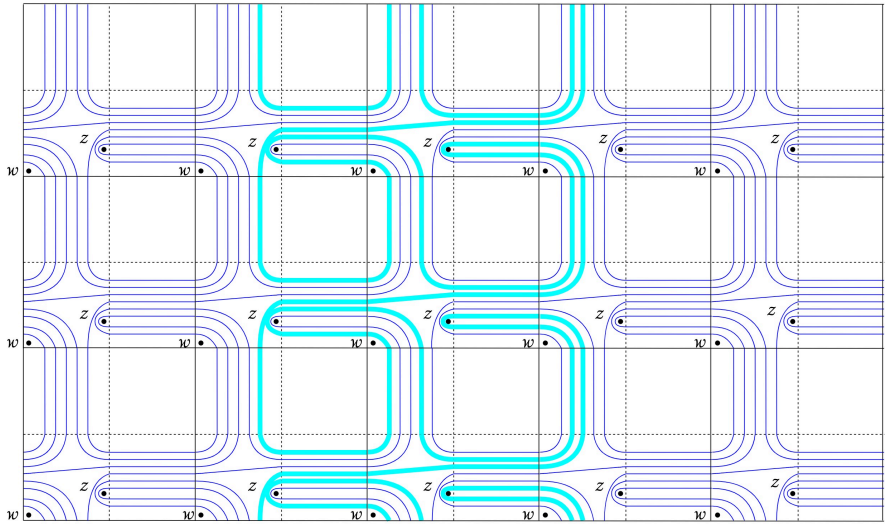}
	\end{overpic}}
	\caption{A choice of $\beta_0$ (highlighted in cyan) that corresponds to the Mazur pattern. (With a slight abuse of notation, we use the same symbols for the lifts of the two basepoints, respectively.)}\label{fig:beta0forMazur}
\end{figure}

\begin{figure}[htbp]{\small
		\begin{overpic}[scale=0.29]
			{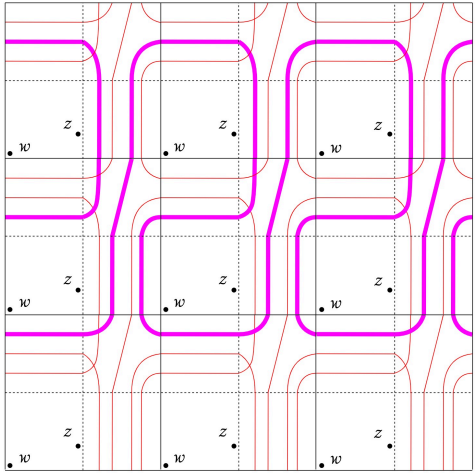}
	\end{overpic}}
	\caption{A choice of $\alpha_0$ (highlighted in magenta) that corresponds to $T_{2,3}$.}\label{fig:alpha0}
\end{figure}

By \cite[page 3]{hanselman2019cabling}, any immersed multi-curve in an infinite cylinder has a unique component wrapping around the cylinder. Then the construction in the end of Section \ref{sec:background} implies that there is at least one horizontal line segment in the second quadrant of $(T^2, \alpha(K), \beta(P), w, z)$. Thus, the lift $\alpha_0$ contains horizontal line segments. 

Ignore any closed component that $\alpha_0$ may contain. Then $\alpha_0$ becomes a connected piece going to the left- and right- infinity on $\mathbb{R}^2$. (Note that we may lose some data by doing so; nevertheless, we shall see by the end of the proof that inequality (\ref{eqn: pattern}) still holds.) Without loss of generality, give $\alpha_0$ an overall rightward orientation.
Then there exist rightward oriented horizontal line segments in $\alpha_0$; indeed, otherwise, $\alpha_0$ would not extend to the right-infinity. Denote by $\mu_1$ the first such segment that intersects $\beta_0$. Without loss of generality, suppose that $\mu_1 \subset [0,\frac{1}{2}]\times [\frac{1}{2},1]$. Consider the set $\mathcal{M}:=\pi^{-1}(\pi(\mu_1)) \cap ([1,\infty)\times [0,1])$, which consists of all lifts of $\pi(\mu_1)$ that lie in $[1,\infty)\times [0,1]$. For each element in $\mathcal{M}$, denote it by $\mu_j$ if it lies in $[j-1,j]\times [0,1]$. Let $k:= \max\{j \mid \mu_j \cap \beta_0 \neq \emptyset\}$, and let $\alpha_0^*$ denote the connected portion of $\alpha_0$ between $\mu_1$ and $\mu_{k+1}$, with $\mu_1$ included and $\mu_{k+1}$ excluded. (See Figure \ref{fig:gammas2.0} for an illustration, where the segments $\mu_j$'s are highlighted in green.)

\begin{figure}[htbp]{\small
		\begin{overpic}[scale=0.4]
			{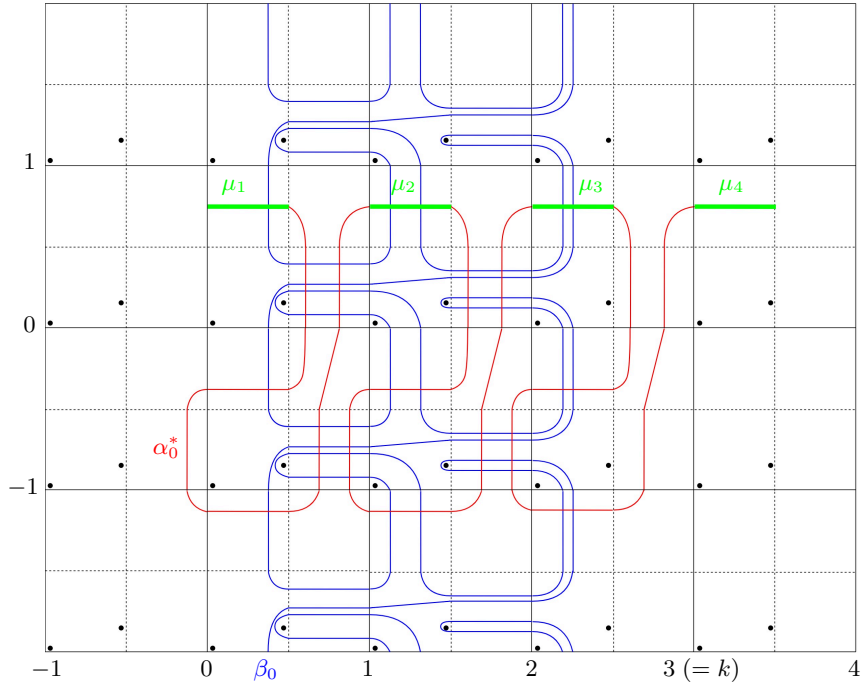}
			\put(256, 175){\textcolor{green}{$\mu_4$}}
			\put(203, 175){\textcolor{green}{$\mu_3$}}
			\put(132, 175){\textcolor{green}{$\mu_2$}}
			\put(68, 175){\textcolor{green}{$\mu_1$}}
			\put(80, -9){\textcolor{blue}{$\beta_0$}}
			\put(42, 75){\textcolor{red}{$\alpha_0^*$}}
			\put(-4, -9){$-1$}
			\put(60, -9){$0$}
			\put(121, -9){$1$}
			\put(183, -9){$2$}
			\put(235, -9){$3 \;(=k)$}
			\put(305, -9){$4$}
			\put(-13, 60){$-1$}
			\put(-7, 121){$0$}
			\put(-7, 183){$1$}
	\end{overpic}}
	\caption{An example of $M(T_{2,3})$.}
	\label{fig:gammas2.0}
\end{figure}

Notice that $\alpha_0$ is periodic, exhibiting horizontal translational symmetry, so $\alpha_0^*$ is periodic as well. Moreover, $\alpha_0^*$ consists of $k$ periods, with the $j$-th period of $\alpha_0^*$ starting from $\mu_j$ and terminates at the left endpoint of $\mu_{j+1}$, $j=1,2, \cdots, k$, if we traverse $\alpha_0^*$ from left to right.

As we mentioned in Section \ref{sec:background}, the curve $\alpha_0^*$ may contain self-intersections; we claim that all of them can be resolved by regular homotopies (which are allowed to cross any lift of basepoints). Indeed, we can complete $\alpha_0^*$ into an immersed closed curve by attaching the top endpoint of a left semi-circle of radius $R$ to the left endpoint of $\mu_1$, attaching the top endpoint of a right semi-circle of radius $R$ to the left endpoint of $\mu_{k+1}$, and then connect the two bottom endpoints of these two semi-circles by a line segment, where $R$ is sufficiently large so that the newly-added three segments do not intersect $\alpha_0^*$. By the $180^{\circ}$ symmetry of immersed curves for knot complements (up to regular homotopy), the turning number induced from each self-intersection will be canceled by that of its symmetric counterpart, so overall, the rotation number of the closed curve we just created is $\pm 1$, depending on the orientation of it. By Lemma \ref{lemma:regularHomotopy}, this rotation number is preserved under regular homotopies, so this closed curve is in the class of circles. Therefore, we can resolve all possible self-intersections of $\alpha_0^*$.

For each $j\in \{1,2,\cdots, k\}$, consider (the closure of) the complement of $\mu_j$ in the $j$-th period of $\alpha_0^*$. Allowing passing lifts of basepoints, regularly homotope this complement with its two endpoints fixed until the curve is within $[j-\frac{1}{2}, j]\times [\frac{1}{2},1]$. By the claim above, we may assume all self-intersections have been resolved, so we can further regularly homotope $\alpha_0^*$ until it becomes a horizontal line segment in $[0, k]\times [\frac{1}{2},1]$. We call the resulting curve $\alpha_0^{*'}$.

Since $\alpha(U)$ is a horizontal line segment, the $\alpha_0$-curve that corresponds to the unknot $U$, denoted by $\alpha_0(U)$, is a horizontal line in $\mathbb{R}\times [\frac{1}{2},1]$. Moreover, $\alpha_0(U)$ first intersects $\beta_0$ in the square $[0,1]\times[0,1]$ and lastly in $[k-1,k]\times[0,1]$, by the definitions of $\mu_1$ and $k$. Therefore, to get the dimension of $\widehat{HFK} (P(U))$, it suffices to consider $\beta_0$ and the portion of $\alpha_0(U)$ lying in $[0, k]\times [\frac{1}{2},1]$. We denote this portion by $\alpha_0^*(U)$, which is exactly the $\alpha_0^*$-curve that corresponds to $U$, and moreover, it can be identified with $\alpha_0^{*'}$.

Consider the set $\alpha_0^{*'}\cap \beta_0$ of intersection points. For each pair $x,y \in \alpha_0^{*'}\cap \beta_0$ that form the two vertices of a trivial bigon (i.e., a bigon that has no basepoint inside) between $\alpha_0^{*'}$ and $\beta_0$, we denote the bigon by $B_{x,y}$ and take a sufficiently small open neighborhood $U_{x,y} \supset B_{x,y}$ such that (1) no basepoint is inside, and (2) after we regularly homotope $\beta_0$ inside $U_{x,y}$ to eliminate the trivial bigon, no new intersection point is generated. Condition (2) can be achieved since we are considering finitely many segments in $\mathbb{R}^2$. 
If multiple bigons are nested, then we start with the innermost one, and in this order, condition (2) can still be achieved.
By the definition of $U_{x,y}$, each move of $\beta_0$ does not cross basepoints. Therefore, after all such trivial bigons are eliminated, we obtain a minimal intersection diagram between $\alpha_0^{*'}$ and $\beta_0$, and the final intersection number equals $\dim \widehat{HFK} (P(U))$. 

Observe the following: 1) the sequence of moves described above eliminate all trivial bigons generated by $\alpha_0^*$ and $\beta_0$ and does not increase the number of intersections with $\beta_0$, 2) the number of intersections between $\alpha_0^*$ and $\beta_0$ is at most the number of intersections between the original $\alpha_0$ and $\beta_0$, and 3) the minimum intersection number (obtained by eliminating trivial bigons via regular homotopies without passing basepoints) between the original $\alpha_0$ and $\beta_0$ gives $\dim \widehat{HFK} (P(K))$. These observations together with the result in the above paragraph imply that $\dim \widehat{HFK} (P(K)) \geqslant \dim \widehat{HFK} (P(U))$.
\end{proof}

\begin{remark}
In the proof above, we applied regular homotopies in the covering space $\mathbb{R}^2$ of $T^2$. Recall that the covering map $\pi$ is defined as $p \times p$, where $p:\mathbb{R}\to S^1$ is given by $x\mapsto (\cos(2\pi x), \sin(2\pi x))$. Composing those regular homotopies with $\pi$, we shall get regular homotopies in the base space $T^2$.
\end{remark}


\begin{proof}[Proof of Theorem \ref{thm:main}]
Here we continue with the curves $\alpha_0$ and $\beta_0$ that were set up in the first paragraph of the proof of Theorem \ref{thm:second}.

Recall that the 5-tuple $(\beta, \mu, \lambda, w, z) \subset \partial (S^1\times D^2)$ is constructed from a genus-one doubly-pointed bordered Heegaard diagram, say $(\overline{\Sigma}, \{\alpha_1^a, \alpha_2^a\}, \beta, w,z)$, for $(S^1\times D^2,P)$. Forgetting the $z$-basepoint, we obtain a genus-one bordered Heegaard diagram for the solid torus $S^1\times D^2$ with the standard parametrization of $\partial(S^1 \times D^2)$. Therefore, up to isotopy (not passing the $w$-basepoint), the diagram $(\overline{\Sigma}, \{\alpha_1^a, \alpha_2^a\}, \beta, w)$ is the one in Figure \ref{fig:borderedSolid}.

\vspace{2pt}
\begin{figure}[htbp]{\small
		\begin{overpic}[scale=0.33]
			{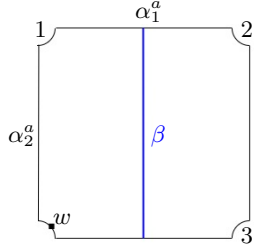}
			\put(-10, 38){$\alpha_2^a$}
			\put(38, 85){$\alpha_1^a$}
			\put(0, 78){$1$}
			\put(78, 78){$2$}
			\put(78, 0){$3$}
			\put(44, 38){\textcolor{blue}{$\beta$}}
			\put(7, 7){$w$}
	\end{overpic}}
	\caption{The genus-one bordered Heegaard diagram of $S^1\times D^2$, up to isotopy.}
	\label{fig:borderedSolid}
\end{figure}

It then follows from the construction of the $\beta_0$-curve that, if we forget the basepoint $z$, we can isotope $\beta_0$ without passing any lift of the basepoint $w$ until it becomes a vertical straight line; we denote the line by $\beta_0'$. See Figure \ref{fig:passZ} for an illustration.

\begin{figure}[htbp]{\small
		\begin{overpic}[scale=0.5]
			{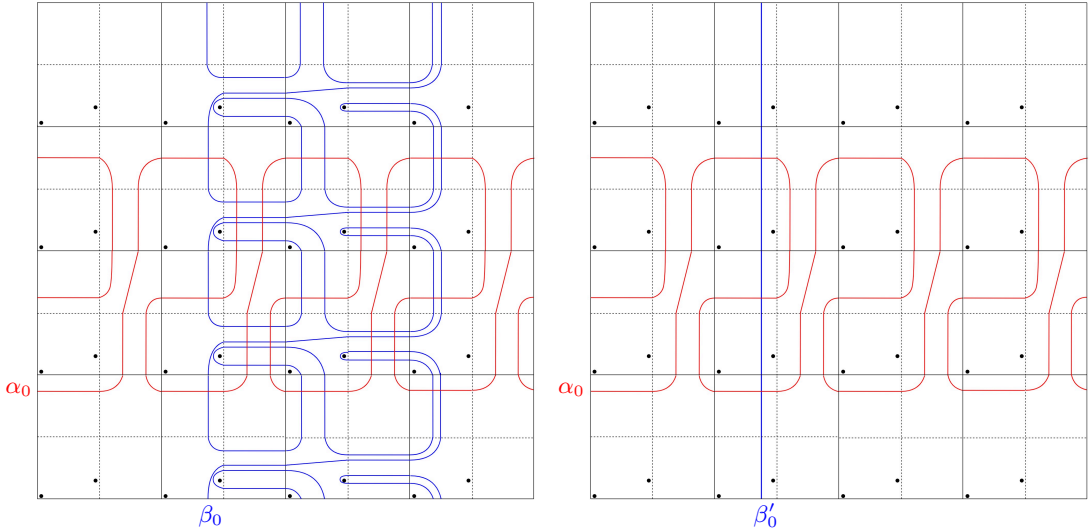}
			\put(61, -8){\textcolor{blue}{$\beta_0$}}
			\put(271, -8){\textcolor{blue}{$\beta_0'$}}
			\put(-12, 41){\textcolor{red}{$\alpha_0$}}
			\put(197, 41){\textcolor{red}{$\alpha_0$}}
	\end{overpic}}
	\caption{The $\alpha_0$-, $\beta_0$-, and $\beta_0'$-curves for $M(T_{2,3})$.}
	\label{fig:passZ}
\end{figure}

Since $\beta(U)$ is a vertical line segment, the $\beta_0$ curve that corresponds to the unknot pattern, denoted by $\beta_0(U)$, is a vertical line, which can be identified with $\beta_0'$. Since the number of intersections between $\beta_0(U)$ and $\alpha_0$ gives $\dim \widehat{HFK} (U(K))$, the number of intersections between $\beta_0'$ and $\alpha_0$ equals $\dim \widehat{HFK} (K)$.

Now we compare the pair of curves $(\alpha_0,\beta_0')$ with the original $(\alpha_0,\beta_0)$. Observe the following: 1) the isotopies described above eliminate all trivial bigons generated by $\beta_0$ and $\alpha_0$ and does not increase the number of intersections with $\alpha_0$, and 2) the minimum intersection number (obtained by eliminating trivial bigons via isotopies without passing basepoints) between $\beta_0$ and $\alpha_0$ gives $\dim \widehat{HFK} (P(K))$. These observations together with the result in the above paragraph imply that $\dim \widehat{HFK} (P(K)) \geqslant \dim \widehat{HFK} (K)$.

\end{proof}

\section{Further Remarks}\label{sec: FurtherRemarks}
In this section we make several remarks, which were suggested by Jennifer Hom and Tye Lidman, further discussing the two inequalities we have proved.

We begin with an easy observation. A major property of knot Floer homology is that it categorifies the Alexander polynomial $\Delta_K(t)$ of knots $K \subset S^3$ \cite{ozsvath2004holomorphic}:
\[
    \Delta_K(t) = \sum_{a\in \mathbb{Z}} \left[\sum_{m\in \mathbb{Z}} (-1)^m \dim \widehat{HFK}_m (K,a) \right] t^a.
\]
Also, there is a symmetry \cite[Proposition 3.10]{ozsvath2004holomorphic}:
\[
    \widehat{HFK}_m (K,a) \cong \widehat{HFK}_{m-2a} (K,-a).
\]
Since one of the characterizing conditions for Alexander polynomials is that 
\[
    \Delta_K(1) = 1,
\] 
it follows that the parity of $\dim \widehat{HFK} (K)$ is odd.

\begin{remark}
Ozsv\'{a}th-Szab\'{o} \cite{ozsvath2004holomorphicgenus} proved that if $\dim \widehat{HFK} (K) = 1$ then $K$ is the unknot; Hedden-Watson \cite[Corollary 8]{hedden2018geography} showed that if $\dim \widehat{HFK} (K) = 3$ then $K$ is a (left- or right-handed) trefoil (see also \cite[Corollary 1.5]{ghiggini2008knot}). From these two knot-detecting results, the example depicted in Figure \ref{fig:alex} (which shows that there exists a $(1,1)$-satellite $K$ with $\dim \widehat{HFK} (K) = 5$), and the above parity result we deduce that, if $\dim \widehat{HFK} (K) < 5$ then $K$ is not a $(1,1)$-satellite.
\end{remark}

Next, we discuss some special cases when we have a strict inequality:
\begin{remark}
In the proof of Theorem \ref{thm:second}, we mentioned that any immersed multi-curve in an infinite cylinder has a unique component wrapping around the cylinder. When the set of immersed curves corresponds to a knot complement, this component is an invariant of the knot, and furthermore, an invariant of the concordance class of the knot \cite[Proposition 2]{hanselman2019cabling}. It follows that all slice knots have a trivial such component as the unknot does; in terms of the notations we used in the proof above, with all closed components removed, $\alpha_0$ is a straight horizontal line. Moreover, for non-trivial slice knots, there are some additional closed components. Therefore, for non-trivial slice companion knot $K$, inequality (\ref{eqn: companion}) in Theorem \ref{thm:main} is strict:
\[
    \dim \widehat{HFK} (P(K)) > \dim \widehat{HFK} (K).
\]
In addition to that case, Petkova \cite[Lemma 7]{petkova2013cables} showed that the complex $CFK^-(K)$ for Floer homologically thin knots $K$ splits into exactly one staircase summand and possibly multiple square summands. Geometrically, a square summand is represented by a closed component in $\alpha_0$, so for Floer homologically thin knots $K$ containing a square summand in $CFK^-(K)$, the above strict inequality holds as well.
\end{remark}

Our last remark is related to gradings:

\begin{remark}
In the proof of Theorem \ref{thm:main}, we managed to isotope the curve $\beta_0$ in a desired way, passing only the $z$-lifts.
Then by Theorem \ref{thm:wchen}, if $\alpha$ is connected, and if we only consider Maslov gradings, what we have proved is also true. That is, if $\alpha$ is connected, the inequality
\[
    \sum_{a\in \mathbb{Z}}\dim \widehat{HFK}_{m} (P(K),a) \geqslant \sum_{a\in \mathbb{Z}}\dim \widehat{HFK}_{m} (K,a)
\]
holds for any Maslov grading $m \in \mathbb{Z}$.

On the other hand, the regular homotopies in the proof of Theorem \ref{thm:second} cannot achieve that property in general; for example, Figure \ref{fig:gammas2.0} shows that we cannot tighten the curve $\alpha_0$ corresponding to $T_{2,3}$ to a horizontal line without passing any $w$-lift. 

If we just consider Alexander gradings, we will not arrive at rank inequalities that work for all $(1,1)$-satellite. Indeed, considering the example depicted in Figure \ref{fig:alex} (see also \cite[Example 1.8]{juhasz2016concordance}), we can make the following observations:
\begin{center}
\begin{tabular}{ c|c|c|c } 

 $a$ & $\sum_{m \in \mathbb{Z}} \dim \widehat{HFK}_m(T_{2,3},a)$ & $\sum_{m \in \mathbb{Z}} \dim \widehat{HFK}_m((T_{2,3})_{2,3},a)$ &Observations \\[3pt]
 \hline
 $-2$ &0 &1 &$0<1$ \\ 
 $-1$ &1 &0 &$1>0$ \\ 
 $0$  &1 &1 &$1=1$ \\

\end{tabular}
\end{center}

\noindent The last column in the above table shows that Theorems \ref{thm:second} and \ref{thm:main} do not have refinements for Alexander gradings. 

\begin{figure}[htbp]{\small
		\begin{overpic}[scale=0.4]
			{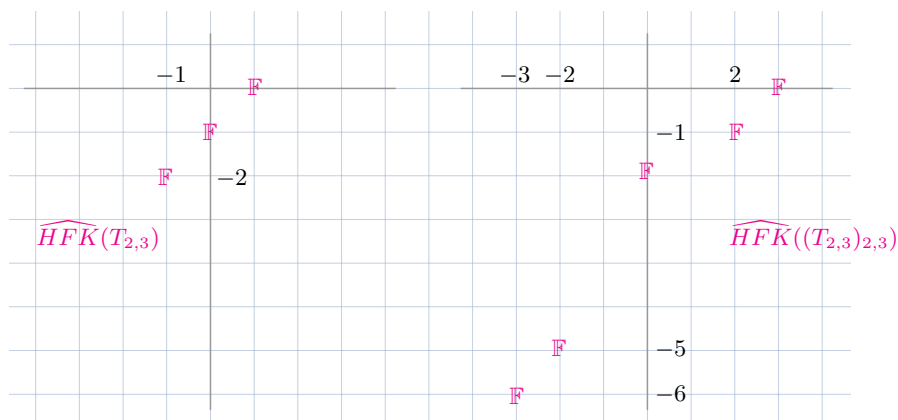}
			\put(56, 93){\textcolor{magenta}{$\mathbb{F}$}}
			\put(73, 110){\textcolor{magenta}{$\mathbb{F}$}}
			\put(90, 127){\textcolor{magenta}{$\mathbb{F}$}}
			\put(55, 132){$-1$}
			\put(78, 93){$-2$}
			\put(10, 70){\textcolor{magenta}{$\widehat{HFK}(T_{2,3})$}}
			\put(189, 10){\textcolor{magenta}{$\mathbb{F}$}}
			\put(205, 28){\textcolor{magenta}{$\mathbb{F}$}}
			\put(238, 95){\textcolor{magenta}{$\mathbb{F}$}}
			\put(272, 110){\textcolor{magenta}{$\mathbb{F}$}}
			\put(288, 127){\textcolor{magenta}{$\mathbb{F}$}}
			\put(185, 132){$-3$}
			\put(202, 132){$-2$}
			\put(272, 132){$2$}
			\put(244, 110){$-1$}
			\put(244, 28){$-5$}
			\put(244, 11){$-6$}
			\put(272, 70){\textcolor{magenta}{$\widehat{HFK}((T_{2,3})_{2,3})$}}
	\end{overpic}}
	\caption{Knot Floer homologies of $T_{2,3}$ and $(T_{2,3})_{2,3}$, plotted on the $(a,m)$-axis, respectively.}
	\label{fig:alex}
\end{figure}

\end{remark}

\printbibliography 

@misc{chen2019knot,
      title={Knot Floer homology of satellite knots with (1,1)-patterns}, 
      author={Wenzhao Chen},
      year={2019},
      eprint={1912.07914},
      archivePrefix={arXiv},
      primaryClass={math.GT}
}

@misc{hanselman2017bordered,
      title={Bordered Floer homology for manifolds with torus boundary via immersed curves}, 
      author={Jonathan Hanselman and Jacob Rasmussen and Liam Watson},
      year={2017},
      eprint={1604.03466},
      archivePrefix={arXiv},
      primaryClass={math.GT}
}

@misc{hanselman2018heegaard,
      title={Heegaard Floer homology for manifolds with torus boundary: properties and examples}, 
      author={Jonathan Hanselman and Jacob Rasmussen and Liam Watson},
      year={2018},
      eprint={1810.10355},
      archivePrefix={arXiv},
      primaryClass={math.GT}
}

@article{ozsvath2004holomorphic,
  title={Holomorphic disks and knot invariants},
  author={Ozsv{\'a}th, Peter and Szab{\'o}, Zolt{\'a}n},
  journal={Advances in Mathematics},
  volume={186},
  number={1},
  pages={58--116},
  year={2004},
  publisher={Elsevier}
}

@book{rasmussen2003floer,
  title={Floer homology and knot complements},
  author={Rasmussen, Jacob Andrew},
  year={2003},
  publisher={Harvard University}
}

@article{ozsvath2004holomorphicgenus,
  title={Holomorphic disks and genus bounds},
  author={Ozsv{\'a}th, Peter and Szab{\'o}, Zolt{\'a}n},
  journal={Geometry \& Topology},
  volume={8},
  number={1},
  pages={311--334},
  year={2004},
  publisher={Mathematical Sciences Publishers}
}

@article{ni2007knot,
  title={Knot Floer homology detects fibred knots},
  author={Ni, Yi},
  journal={Inventiones mathematicae},
  volume={170},
  number={3},
  pages={577--608},
  year={2007},
  publisher={Springer}
}

@article{ghiggini2008knot,
  title={Knot Floer homology detects genus-one fibred knots},
  author={Ghiggini, Paolo},
  journal={American journal of mathematics},
  volume={130},
  number={5},
  pages={1151--1169},
  year={2008},
  publisher={Johns Hopkins University Press}
}

@book{lipshitz2018bordered,
  title={Bordered Heegaard Floer homology},
  author={Lipshitz, Robert and Ozsv{\'a}th, Peter and Thurston, Dylan},
  volume={254},
  number={1216},
  year={2018},
  publisher={American Mathematical Society}
}

@article{levine2012knot,
  title={Knot doubling operators and bordered Heegaard Floer homology},
  author={Levine, Adam Simon},
  journal={Journal of Topology},
  volume={5},
  number={3},
  pages={651--712},
  year={2012},
  publisher={Oxford University Press}
}

@article{petkova2013cables,
  title={Cables of thin knots and bordered Heegaard Floer homology},
  author={Petkova, Ina},
  journal={Quantum Topology},
  volume={4},
  number={4},
  pages={377--409},
  year={2013}
}

@article{hom2014bordered,
  title={Bordered Heegaard Floer homology and the tau-invariant of cable knots},
  author={Hom, Jennifer},
  journal={Journal of Topology},
  volume={7},
  number={2},
  pages={287--326},
  year={2014},
  publisher={Wiley Online Library}
}

@misc{hanselman2019cabling,
      title={Cabling in terms of immersed curves}, 
      author={Jonathan Hanselman and Liam Watson},
      year={2019},
      eprint={1908.04397},
      archivePrefix={arXiv},
      primaryClass={math.GT}
}

@article{karakurt2015rank,
  title={Rank inequalities for the Heegaard Floer homology of Seifert homology spheres},
  author={Karakurt, {\c{C}}a{\u{g}}r{\i} and Lidman, Tye},
  journal={Transactions of the American Mathematical Society},
  volume={367},
  number={10},
  pages={7291--7322},
  year={2015}
}

@article{juhasz2016concordance,
  title={Concordance maps in knot Floer homology},
  author={Juh{\'a}sz, Andr{\'a}s and Marengon, Marco},
  journal={Geometry \& Topology},
  volume={20},
  number={6},
  pages={3623--3673},
  year={2016},
  publisher={Mathematical Sciences Publishers}
}

@article{geiges2009contact,
  title={A contact geometric proof of the Whitney-Graustein theorem},
  author={Geiges, Hansj{\"o}rg},
  journal={L’Enseignement Math{\'e}matique},
  volume={55},
  number={1},
  pages={93--102},
  year={2009}
}

@article{whitney1937regular,
  title={On regular closed curves in the plane},
  author={Whitney, Hassler},
  journal={Compositio Mathematica},
  volume={4},
  pages={276--284},
  year={1937}
}

@article{hedden2018geography,
  title={On the geography and botany of knot Floer homology},
  author={Hedden, Matthew and Watson, Liam},
  journal={Selecta Mathematica},
  volume={24},
  number={2},
  pages={997--1037},
  year={2018},
  publisher={Springer}
}

@article{boileau2016one,
  title={One-domination of knots},
  author={Boileau, Michel and Boyer, Steven and Rolfsen, Dale and Wang, SC},
  journal={Illinois Journal of Mathematics},
  volume={60},
  number={1},
  pages={117--139},
  year={2016},
  publisher={Duke University Press}
}

\end{document}